\newtheorem{thm}{Theorem}[section]
\newtheorem{lem}[thm]{Lemma}
\newtheorem{prop}[thm]{Proposition}
\newtheorem{cor}[thm]{Corollary}
\newtheorem*{triv}{Trivial observation}
\theoremstyle{definition}
\newtheorem{defn}[thm]{Definition}
\newtheorem*{rem*}{Remark}
\numberwithin{equation}{section} 
\numberwithin{figure}{section}
\numberwithin{table}{section}
\newcommand{\bX}{\mathbb{X}}
\newcommand{\bG}{\mathbb{G}}
\begin{document}

\title{On the subgaussian comparison theorem}

\author[Van Handel]{Ramon van Handel}
\address{Department of Mathematics, Princeton University, Princeton, NJ 
08544, USA}
\email{rvan@math.princeton.edu}

\begin{abstract}
The aim of this expository note is to prove that any $1$-subgaussian 
random vector is dominated in the convex ordering by a universal constant 
times a standard Gaussian vector. This strengthens Talagrand's celebrated 
subgaussian comparison theorem. The proof combines a tensorization 
argument due to J.~Liu with ideas that date back to the work of 
Fernique.
\end{abstract}

\subjclass[2020]{60E15; 
                 60G15} 

\maketitle

\raggedbottom

\section{Introduction}
\label{sec:intro}

A random vector $X$ in $\mathbb{R}^n$ is said to be $1$-subgaussian if
$\mathbf{E}[X]=0$ and
$$
	\mathbf{P}\big[ |\langle v,X\rangle| > x \big]
	\le 2e^{-x^2/2}
$$
for all $x\ge 0$ and $v\in S^{n-1}$, that is, if it is centered and the 
tail probabilities of its linear projections are dominated by 
those of a standard Gaussian random variable. The main 
result of this note is that this weak form of domination implies a much 
stronger form of domination for the distribution of $X$.

\begin{thm}
\label{thm:main}
Let $X$ be any $1$-subgaussian random vector in $\mathbb{R}^n$ and 
$G\sim N(0,I_n)$ be a 
standard Gaussian vector in $\mathbb{R}^n$. Then
$$
	\mathbf{E}[f(X)] \le \mathbf{E}[f(cG)]
$$
for every convex function $f:\mathbb{R}^n\to\mathbb{R}$, where $c$ is
a universal constant.\footnote{%
As every convex function is lower bounded by an affine function, the 
expectations $\mathbf{E}[f(X)]$ and $\mathbf{E}[f(cG)]$ are well defined 
for every convex function $f$ and take values in $(-\infty,+\infty]$.}
\end{thm}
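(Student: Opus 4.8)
The plan is to reduce the theorem to a single analytic inequality by interpolating between $X$ and $cG$, and then to establish that inequality by combining a one-dimensional comparison with a tensorization argument. First I would cut down the class of test functions. Since every convex $f$ is the increasing pointwise supremum of its affine minorants, and such a supremum is an increasing limit of finite maxima, monotone convergence shows it suffices to treat $f=\max_{i\le m}(\langle v_i,\cdot\rangle+a_i)$; convolving with a small Gaussian density preserves convexity and lets both $\mathbf E[f(X)]$ and $\mathbf E[f(cG)]$ converge down to their original values, so I may assume $f$ smooth with $\nabla f,\Delta f$ bounded. Let $G'\sim N(0,I_n)$ be independent of $X$ and set $Y_t=\sqrt{1-t}\,X+\sqrt{t}\,cG'$ for $t\in[0,1]$, so that $\Phi(t):=\mathbf E[f(Y_t)]$ interpolates between $\mathbf E[f(X)]$ at $t=0$ and $\mathbf E[f(cG)]$ at $t=1$; it then suffices to show $\Phi$ is nondecreasing.

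Differentiating and applying Gaussian integration by parts to the $G'$-term gives
\[ \Phi'(t)=\tfrac{c^2}{2}\,\mathbf E[\Delta f(Y_t)]-\tfrac{1}{2\sqrt{1-t}}\,\mathbf E[\langle X,\nabla f(Y_t)\rangle]. \]
Because $f$ is convex, $\Delta f\ge 0$, so the whole problem collapses to the single inequality
\[ \mathbf E[\langle X,\nabla f(Y_t)\rangle]\le c^2\sqrt{1-t}\,\mathbf E[\Delta f(Y_t)] \qquad (0\le t<1). \]
The role of the factor $c^2$ is precisely to buy room: if $X$ were standard Gaussian the inequality would hold with $c=1$ by a second integration by parts, so the subgaussian hypothesis must be strong enough that inflating the Gaussian by a universal constant compensates for $X$ not being Gaussian. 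Note that $\mathbf E[X]=0$ lets me read the left-hand side as a covariance, the natural object to estimate, and one checks that both sides vanish at the rate $\sqrt{1-t}$ as $t\to1$, so the identity and the target are self-consistent.

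The scalar input is the one-dimensional comparison: a $1$-subgaussian real variable $\xi$ satisfies $\xi\le_{\mathrm{cx}}c\,g$ for $g\sim N(0,1)$. I would prove this by the Hardy--Littlewood cut-function criterion --- for centered variables with equal mean, $\xi\le_{\mathrm{cx}}c g$ is equivalent to $\mathbf E[(\xi-s)_+]\le\mathbf E[(cg-s)_+]$ for all $s$ --- writing $\mathbf E[(\xi-s)_+]=\int_s^\infty\mathbf P[\xi>u]\,du$ and bounding the tail by $2e^{-u^2/2}$, which makes $c$ a finite universal constant. The remaining, and genuinely hard, step is to pass from this scalar control to the displayed functional inequality in $\mathbb R^n$. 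Two ideas enter: Fernique's smoothing (the Gaussian convolution hidden in $Y_t$ for $t>0$) converts pointwise tail information about the projections of $X$ into bounds on the averaged derivatives of $f$ that appear on the right-hand side, and Liu's tensorization globalizes the one-dimensional estimates across all $n$ directions without paying a dimensional factor, producing the dimension-free constant.

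I expect the tensorization to be the main obstacle. The difficulty is structural: the coordinates of $X$ may be arbitrarily dependent and only the one-dimensional marginals $\langle v,X\rangle$ are controlled, so the covariance $\mathbf E[\langle X,\nabla f(Y_t)\rangle]$ cannot be bounded by treating the coordinates one at a time, since a naive coordinatewise replacement would lose a factor growing with $n$. That the hypothesis nonetheless forces enough spreading of the dependence to make a dimension-free bound possible is already visible in the fact that a vector with perfectly aligned Rademacher coordinates fails to be $1$-subgaussian. Carrying out the tensorization in a form compatible with the convex order, so that the scalar comparison upgrades to a dimension-free functional inequality, is where the real work lies.
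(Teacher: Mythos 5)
There is a genuine gap, and it sits exactly where you place ``the real work'': the inequality
\[
\mathbf{E}[\langle X,\nabla f(Y_t)\rangle]\le c^2\sqrt{1-t}\,\mathbf{E}[\Delta f(Y_t)]
\]
is never proved --- the invocations of Fernique's smoothing and Liu's tensorization in your last two paragraphs describe hoped-for mechanisms, not arguments. Worse, your reduction is to a \emph{false} statement, so no amount of further work can complete the proof as set up. The interpolation $\Phi(t)=\mathbf{E}[f(\sqrt{1-t}\,X+\sqrt{t}\,cG')]$ need not be monotone for $1$-subgaussian $X$. Take $n=1$ and $X$ Rademacher (which is $1$-subgaussian), and let $f$ be a smooth convex approximation of $x\mapsto|x|$ with $f''$ supported in $[-\tfrac12,\tfrac12]$ and $f'(\pm1)=\pm1$. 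Since $Y_t\to X$ almost surely as $t\to0$ and $f'$, $f''$ are bounded and continuous, we get $\mathbf{E}[\Delta f(Y_t)]\to\mathbf{E}[f''(X)]=0$ while $\mathbf{E}[Xf'(Y_t)]\to\mathbf{E}[Xf'(X)]=1$; your (correct) formula for $\Phi'$ then gives $\Phi'(t)\to-\tfrac12$ as $t\to0^+$, for \emph{every} fixed $c$. So $\Phi$ strictly decreases near $t=0$ (concretely, $\mathbf{E}|\sqrt{1-t}\,X+\sqrt{t}\,cG'|\approx\sqrt{1-t}$ for small $t$), the endpoint inequality holds without pointwise monotonicity, and your displayed inequality fails, with left side tending to $1$ and right side to $0$. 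Starting the flow at some $t_0>0$ does not help: the missing comparison $\Phi(0)\le\Phi(t_0)$ is then essentially the original problem. The obstruction is structural --- a heat-flow proof needs the transport term $\mathbf{E}[\langle X,\nabla f\rangle]$ to be controlled by the diffusion term $\mathbf{E}[\Delta f]$ uniformly down to $t=0$, a Stein-kernel type property that a discrete subgaussian variable simply does not have.

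The paper's proof avoids differentiating anything. After the same first reduction (to $f(x)=\sup_{t\in T}\{\langle t,x\rangle+m_t\}$, i.e.\ to Theorem \ref{thm:mainprocess}), it uses the trivial identity of section \ref{sec:triv}, $\mathbf{E}[\sup_t\{X_t+m_t\}]=\sup_\mu\{\mathscr{F}(X,\mu)+\int m_t\,\mu(dt)\}$, so that everything reduces to the comparison $\mathscr{F}(X,\mu)\le c\,\mathscr{F}(G,\mu)$ for Fernique's coupling functional. Liu's tensorization principle (Proposition \ref{prop:liu}) identifies $\mathscr{F}(X,\mu)$ with the limiting expected supremum of the averaged processes $X_{\mathbf{t}}=\frac{1}{NK}\sum_i X^{(i)}_{\mathbf{t}_i}$ over $\mathcal{T}_N(\mu)$, which are \emph{stationary} under the symmetric group action; for stationary processes the elementary two-sided Dudley--Fernique entropy comparison (Theorem \ref{thm:fernique}) bounds the subgaussian supremum by the Gaussian one, and Lemma \ref{lem:tv} extends the conclusion from $\mu\in\mathcal{P}_K$ to arbitrary $\mu$ by total-variation continuity. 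Your instinct that a dimension-free tensorization is the crux is correct, but the tensorization that works runs over independent copies of the whole process indexed by sequences, combined with chaining for stationary processes --- not over coordinates of $\mathbb{R}^n$, and not inside a semigroup argument.
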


As we will recall below, the conclusion of Theorem \ref{thm:main} for 
\emph{$1$-homogeneous} convex functions is a direct consequence of the 
celebrated majorizing measure theorem of Talagrand \cite[\S 3]{Tal87}. 
That such a comparison principle holds for \emph{arbitrary} convex 
functions however appears to have been overlooked. This stronger form of 
domination is fundamentally more powerful and leads to a better 
structural understanding of subgaussian vectors. For example, the 
following corollary provides an equivalent formulation of
Theorem \ref{thm:main} by a 
classical result of Strassen \cite{Str65}.

\begin{cor}
There is a universal constant $c$ such that for every $1$-subgaussian 
vector $X$ in $\mathbb{R}^n$, we can construct $X$ and a standard 
Gaussian vector $G\sim N(0,I_n)$ on a common probability space such that 
$X = c\mathbf{E}[G|X]$.
\end{cor}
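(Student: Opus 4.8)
The plan is to obtain the corollary as an immediate consequence of Theorem \ref{thm:main} together with Strassen's theorem on the convex order. First I would restate what Theorem \ref{thm:main} provides: writing $\mu$ for the law of $X$ and $\nu$ for the law of $cG$, where $c$ is the universal constant of the theorem, the inequality $\mathbf{E}[f(X)]\le\mathbf{E}[f(cG)]$ for all convex $f:\mathbb{R}^n\to\mathbb{R}$ says precisely that $\mu$ is dominated by $\nu$ in the convex (Choquet) order. Both measures have finite first moment---$X$ is $1$-subgaussian and hence has moments of every order, and $cG$ is Gaussian---so the convex-order comparison is nondegenerate and the conditional expectations appearing below are well defined.

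Next I would invoke Strassen's theorem \cite{Str65} in its martingale-coupling form: whenever $\mu,\nu$ are probability measures on $\mathbb{R}^n$ with finite first moment satisfying $\int f\,d\mu\le\int f\,d\nu$ for every convex $f$, there exist random vectors $U\sim\mu$ and $V\sim\nu$, defined on a common probability space, such that $\mathbf{E}[V\mid U]=U$ almost surely. Applied to the present $\mu$ and $\nu$, this yields a coupling in which $U$ has the law of $X$, $V$ has the law of $cG$, and $\mathbf{E}[V\mid U]=U$.

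To finish I would simply rename $X:=U$ and set $G:=c^{-1}V$: then $X$ carries the prescribed $1$-subgaussian law, $G\sim N(0,I_n)$, both are realized on the common probability space, and the coupling identity becomes $X=\mathbf{E}[V\mid U]=c\,\mathbf{E}[G\mid X]$, which is the assertion. I do not expect a genuine obstacle here, since essentially all the content has already been absorbed into Theorem \ref{thm:main}; the only points warranting a line of justification are the finiteness of the first moments and the remark that testing against \emph{all} real-valued convex functions in Theorem \ref{thm:main} is at least as strong as the convex-order hypothesis required by \cite{Str65}, so that Strassen's theorem indeed applies.
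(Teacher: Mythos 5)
Your proposal is correct and is exactly the argument the paper intends: the corollary is stated as an equivalent reformulation of Theorem \ref{thm:main} via Strassen's characterization of the convex order by martingale couplings, which is precisely the route you take. The only details worth recording are the ones you already flag---finiteness of first moments and the identification of the hypothesis of \cite{Str65} with the conclusion of Theorem \ref{thm:main}---so nothing is missing.
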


Theorem \ref{thm:main} will follow almost immediately by observing its 
connection with some old and recent ideas in the study of suprema of 
random processes. Beside the formulation of Theorem \ref{thm:main} and 
the more general Theorem \ref{thm:mainprocess} below, the 
expository aim of this note is to draw attention to these developments.

\subsection{Random processes}
\label{sec:introprocess}

We begin by formulating a more general form of the subgaussian comparison 
principle in terms of random processes.

To avoid irrelevant technicalities, we will consider only random 
processes defined on a finite index set $T$; the extension of the result
below to more general index sets is routine. 
Let $(G_t)_{t\in T}$ be any 
centered Gaussian process, and denote by
$$
	d(t,s) = \|G_t-G_s\|_2
$$
the associated natural metric on $T$. Let $(X_t)_{t\in T}$ be any 
centered random process that is subgaussian and dominated by $(G_t)_{t\in T}$
in the sense that
$$
	\mathbf{P}\big[|X_t-X_s|> x\big] \le 2e^{-x^2/2d(t,s)^2}
$$
for all $t,s\in T$. Finally, let $(m_t)_{t\in T}$ be any family of real 
numbers $m_t\in\mathbb{R}$ defined on the same index set. We will prove 
the following.

\begin{thm}
\label{thm:mainprocess}
For any centered Gaussian 
process $(G_t)_{t\in T}$, centered random process $(X_t)_{t\in T}$, and 
$(m_t)_{t\in T}$ satisfying the above assumptions, we have
$$
	\mathbf{E}
	\bigg[
	\sup_{t\in T}\big\{ X_t + m_t\big\}
	\bigg] \le
	\mathbf{E}
	\bigg[
	\sup_{t\in T}\big\{ cG_t + m_t\big\}
	\bigg],
$$
where $c$ is a universal constant.
\end{thm}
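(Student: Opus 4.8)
The plan is to eliminate the drift $(m_t)_{t\in T}$ by a Gaussian comparison and thereby reduce the entire statement to the construction of a single dominating Gaussian process. The key structural observation is that the Sudakov--Fernique inequality compares Gaussian suprema at \emph{no loss} in the presence of an arbitrary common drift: if $(H_t)_{t\in T}$ is a centered Gaussian process with $\|H_t-H_s\|_2\le c\,d(t,s)$ for all $t,s$, then $H_t+m_t$ and $cG_t+m_t$ have equal means $m_t$ and increment variances ordered by $\mathbf{E}(H_t-H_s)^2\le c^2 d(t,s)^2=\mathbf{E}(cG_t-cG_s)^2$, so that
$$
  \mathbf{E}\Big[\sup_{t\in T}\{H_t+m_t\}\Big]\le\mathbf{E}\Big[\sup_{t\in T}\{cG_t+m_t\}\Big].
$$
Thus it suffices to produce, from the merely subgaussian process $X$, a centered Gaussian process $H$ with $\|H_t-H_s\|_2\lesssim d(t,s)$ that dominates $X$ in the sense that $\mathbf{E}[\sup_t\{X_t+m_t\}]\le\mathbf{E}[\sup_t\{H_t+m_t\}]$ for \emph{every} family $(m_t)$; equivalently, that $X$ is dominated by $H$ in the convex order. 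This reduction is what makes a \emph{universal} constant plausible, since the drift never interacts with the constant and the only loss is the constant variance inflation incurred when passing from $X$ to $H$.

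The construction of $H$ begins with a single-scale comparison. At the base this is the one-dimensional fact that a centered subgaussian variable of parameter $\sigma$ is dominated in the convex order by $c_0$ times an $N(0,\sigma^2)$ variable, an elementary estimate; the same comparison must then be carried out at each dyadic resolution of the metric space $(T,d)$. Following Fernique, I would fix an increasing sequence of nets $T_0\subset T_1\subset\cdots\subset T$ with $|T_n|\le 2^{2^n}$ together with nearest-point projections $\pi_n:T\to T_n$, and decompose
$$
  X_t=X_{\pi_0(t)}+\sum_{n\ge 1}\big(X_{\pi_n(t)}-X_{\pi_{n-1}(t)}\big),
$$
so that the scale-$n$ increment is subgaussian with parameter controlled by the covering radius of $T_{n-1}$. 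Replacing each scale by an \emph{independent} Gaussian block $H^{(n)}$ supplied by the single-scale comparison and setting $H_t=\sum_n H^{(n)}_t$ as an orthogonal sum would then yield a Gaussian process whose increments telescope to $\|H_t-H_s\|_2\lesssim d(t,s)$ by the usual chaining bookkeeping.

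The heart of the argument, and the step I expect to be the main obstacle, is showing that these per-scale comparisons can be \emph{glued} into a single convex-order domination $\mathbf{E}[\sup_t\{X_t+m_t\}]\le\mathbf{E}[\sup_t\{H_t+m_t\}]$ with a variance inflation that is a universal constant, independent of the number of scales and of $n$ and $|T|$. A naive scale-by-scale replacement loses a constant factor per scale and is therefore useless, as the number of scales is unbounded. This is precisely where the tensorization argument of J.~Liu enters: one sets up the comparison on a product structure indexed by the chaining tree and tensorizes the single-scale estimate over it, so that the constants combine inside a single convex-order inequality rather than accumulating across scales. Verifying that convex-order domination is preserved under this gluing---that the tensorized Gaussian blocks still dominate the tensorized increments of $X$---is the technical core, and it is here that the subgaussian hypothesis is used in full.

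Finally I would assemble the pieces: the tensorization yields the dominating Gaussian $H$ with $\|H_t-H_s\|_2\le c\,d(t,s)$ for a universal $c$ and with $X$ dominated by $H$ in the convex order, and the Sudakov--Fernique step of the first paragraph then converts this into the desired comparison with $cG$. All constants produced along the way---the single-scale constant $c_0$, the chaining constant, and the tensorization constant---are universal and combine into one universal $c$, completing the proof.
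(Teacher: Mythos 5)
Your opening step is correct: the Sudakov--Fernique inequality is valid for Gaussian processes with equal (not necessarily zero) means, so a centered Gaussian process $H$ with $\|H_t-H_s\|_2\le c\,d(t,s)$ that satisfies $\mathbf{E}[\sup_t\{X_t+m_t\}]\le\mathbf{E}[\sup_t\{H_t+m_t\}]$ for every drift $(m_t)$ would indeed yield the theorem. But this reduction has essentially no content: such an $H$ exists if and only if the theorem holds (take $H=cG$), so the entire difficulty is transferred intact into the step that you yourself flag as the ``main obstacle'' and ``technical core''---and that step is never proved. Moreover, the chaining scheme you sketch for it faces a real obstruction, not just bookkeeping. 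All-drift (convex-order) domination does not decompose along a chaining tree: the increments $X_{\pi_n(t)}-X_{\pi_{n-1}(t)}$ at different scales are \emph{dependent}, and domination of a sum in this sense does not follow from per-scale domination of dependent summands, let alone after replacing them by \emph{independent} Gaussian blocks. Scale-by-scale arguments are precisely the kind of device that is lossy for non-centered suprema---this is the same defect the paper attributes to the ``peeling device''---because the drift $m_t$ cannot be attached to any single scale of the telescoping decomposition. Your remark that the constants must not accumulate across scales identifies a symptom, but no mechanism is given that prevents it.

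The appeal to Liu's tensorization does not supply that mechanism, because it mischaracterizes the tool: Liu's principle has nothing to do with tensorizing over a chaining tree. It concerns Fernique's functional $\mathscr{F}(X,\mu)=\sup\mathbf{E}[X_Z]$ (supremum over couplings of $X$ with $Z\sim\mu$), and states that $\mathscr{F}(X,\mu)=\lim_{N\to\infty}\mathbf{E}\big[\sup_{\mathbf{t}\in\mathcal{T}_N(\mu)}\frac{1}{NK}\sum_{i=1}^{NK}X^{(i)}_{\mathbf{t}_i}\big]$, where the supremum runs over index sequences with empirical measure $\mu$ and the $X^{(i)}$ are i.i.d.\ copies of $X$; the point is that this auxiliary process is \emph{stationary}, since the symmetric group acts transitively on $\mathcal{T}_N(\mu)$. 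The paper's proof then runs entirely differently from your outline: (i) by the trivial duality $\mathbf{E}[\sup_t\{X_t+m_t\}]=\sup_\mu\{\mathscr{F}(X,\mu)+\int m_t\,\mu(dt)\}$, the drift separates additively, so it suffices to prove $\mathscr{F}(X,\mu)\le c\,\mathscr{F}(G,\mu)$ for each fixed $\mu$; (ii) tensorization rewrites both sides as expected suprema of stationary processes; (iii) the classical Dudley--Fernique covering-number theorem for \emph{stationary} processes gives the comparison with a universal constant; (iv) a total-variation continuity lemma extends from atomic $\mu$ with rational weights to general $\mu$. In other words, the drift is not handled by a drift-respecting Gaussian comparison as in your first paragraph---it is dualized away before any comparison is made---and no multiscale decomposition or convex-order surgery on $X$ ever occurs. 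To rescue your outline you would need exactly the inequality $\mathscr{F}(X,\mu)\le c\,\mathscr{F}(G,\mu)$, and it is the stationarity produced by averaging i.i.d.\ copies, not a chaining tree, that makes it provable by elementary means.
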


Theorem \ref{thm:main} follows readily from Theorem 
\ref{thm:mainprocess}. Indeed, applying Theorem \ref{thm:mainprocess} 
with $T\subset\mathbb{R}^n$, $X_t = \langle t,X\rangle$, $G_t=\langle 
t,G\rangle$ yields the conclusion of Theorem \ref{thm:main} for any 
function $f(x)=\sup_{t\in T}\{\langle t,x\rangle + m_t\}$ that is a 
finite maximum of affine functions. As any convex function 
$f:\mathbb{R}^n\to\mathbb{R}$ is the limit of an increasing sequence of 
functions of this form, Theorem \ref{thm:main} follows by monotone 
convergence.

In the same manner, the special case of Theorem \ref{thm:mainprocess} 
with $m_t\equiv 0$ yields the conclusion of Theorem \ref{thm:main} for 
functions of the form $f(x)=\sup_{t\in T}\langle t,x\rangle$, that is, 
for $1$-homogeneous convex functions. This special case follows 
from the celebrated majorizing measure theorem of Talagrand 
\cite{Tal87}, which states that
$$
	\frac{1}{c}
	\mathbf{E}
	\bigg[
	\sup_{t\in T} X_t
	\bigg]
	\le
	\gamma_2(T,d)
	\le
	c
	\mathbf{E}
	\bigg[
	\sup_{t\in T} G_t
	\bigg]
$$
where $\gamma_2(T,d)$ is an explicit functional that is defined in 
terms of the geometry of the metric space $(T,d)$. As 
$(G_t)_{t\in T}$ is itself a subgaussian process, replacing $(X_t)_{t\in 
T}$ by $(G_t)_{t\in T}$ on the left-hand side of this inequality shows 
that $\gamma_2(T,d)$ characterizes the expected supremum of 
any \emph{centered} Gaussian process up to a universal constant.
Since $\gamma_2(T,d)$ is difficult to compute in concrete situations,
however, the application of the majorizing measure theorem as a 
subgaussian comparison principle has proved to be one of its most useful 
features in practice.

Theorem \ref{thm:mainprocess} naturally leads us to seek a generalization 
of the majorizing measure theorem to \emph{non-centered} Gaussian 
processes. Despite that the suprema of non-centered processes arise in 
many applications, the problem of achieving sharp bounds for such 
processes does not appear to have been discussed in the literature. A 
common method for handling non-centered processes, the ``peeling 
device'', is to split $T$ into slices on which the value of $m_t$ is 
roughly constant and to estimate the supremum on each slice separately; 
see, e.g., \cite{vdG00}. While effective in various applications, such a 
procedure need not lead to sharp bounds.

We presently aim to explain that a form of the majorizing measure theorem 
for non-centered Gaussian processes is nonetheless already implicitly 
contained in another, largely forgotten, part of Talagrand's paper 
\cite[\S 4]{Tal87}.

\subsection{Fernique's functional}
\label{sec:triv}

We will need the following notion. Here and throughout this note,
$\|\cdot\|$ will denote the Euclidean norm.

\begin{defn}
Let $T$ be a finite set, and let $P_X$ and $\mu$ be probability measures
on $\mathbb{R}^T$ and $T$, respectively, with $\int \|x\|\, P_X(dx)<\infty$.
We define
$$
	\mathscr{F}(P_X,\mu) =
	\sup_{\substack{X\sim P_X\\ Z\sim\mu}} \mathbf{E}[X_Z],
$$
where the supremum is over all couplings of $P_X$ and $\mu$.
Given a random process $X=(X_t)_{t\in T}$, we will also write
$\mathscr{F}(X,\mu)=\mathscr{F}(P_X,\mu)$ where $P_X$ is the law of $X$.
\end{defn}

In other words, the quantity $\mathscr{F}(X,\mu)$ is the largest expected 
value of the random process $X$ evaluated at a random index with 
distribution $\mu$. This functional was first introduced by Fernique 
\cite{Fer76,Fer81} as a tool for understanding the expected suprema of 
random processes; some additional comments on the original motivation 
behind this quantity can be found in section \ref{sec:fernique} below.

Returning to the setting of Theorem \ref{thm:mainprocess}, we now make a

\begin{triv}
We can write
$$
	\mathbf{E}
	\bigg[
	\sup_{t\in T}\big\{ X_t + m_t\big\}
	\bigg] =
	\sup_\mu
	\bigg\{\mathscr{F}(X,\mu) +
	\int m_t\, \mu(dt)
	\bigg\},
$$
where the supremum is taken over all probability measures $\mu$ on $T$.
\end{triv}

\begin{proof}
The right-hand side can be equivalently expressed as
$$
	\sup_\mu \sup_{\substack{X\sim P_X\\ Z\sim\mu}} \mathbf{E}[X_Z
	+m_Z].
$$
This quantity is clearly upper bounded by $\mathbf{E}[\sup_{t\in T}
\{X_t+m_t\}]$. It is also lower bounded by it, as we can choose 
$(X,Z)$ so that $Z$ is a maximizer of $(X_t+m_t)_{t\in T}$.
\end{proof}

In the final section of his paper \cite[\S 4]{Tal87}, by an elaboration 
of the methods used to prove the majorizing measure theorem, Talagrand also 
provides a characterization of Fernique's functional 
$\mathscr{F}(G,\mu)$ for any centered Gaussian process $G$ and measure 
$\mu$, up to a universal constant, in terms of a certain geometric 
functional $\mathrm{I}_\mu(T,d)$ (for example, one may take the quantity 
$Q_3$ in \cite[Theorem 30]{Tal87} as its definition). When combined with 
the  above trivial observation, this provides the following majorizing 
measure theorem for non-centered Gaussian processes:
$$
        \mathbf{E}
        \bigg[
        \sup_{t\in T}\bigg\{ \frac{1}{c}G_t + m_t\bigg\}
        \bigg] \le
	\sup_\mu \bigg\{ \mathrm{I}_\mu(T,d) +
        \int m_t\, \mu(dt)
        \bigg\}
	\le
        \mathbf{E}
        \bigg[
        \sup_{t\in T}\big\{ cG_t + m_t\big\}
        \bigg].
$$
To complete the proof of Theorem \ref{thm:mainprocess}, it only remains 
to show that the first inequality remains valid if the Gaussian process
$(G_t)_{t\in T}$ is replaced by the subgaussian process $(X_t)_{t\in T}$
on the left-hand side. It seems likely that the methods of Fernique and 
Talagrand can be used to show that this is the case, but this is not 
immediately obvious from the proof that is presented in \cite{Tal87}.

Instead of pursuing this route, we aim to draw attention to a striking 
new approach to the majorizing measure theorem that was recently 
discovered by J.~Liu \cite{Liu25} which, as a byproduct, readily yields 
the comparison principle for $\mathscr{F}(X,\mu)$ (cf.\ \cite[Corollary 
2]{Liu25}) that is needed to complete the proof of Theorem 
\ref{thm:mainprocess}. The rest of this note is devoted to a short 
exposition of the proof of this result. A feature that is emphasized in 
our presentation is that it is now possible to prove comparison theorems 
such as Theorem \ref{thm:mainprocess} in an elementary manner that 
circumvents the need to achieve a complete geometric characterization of
the quantities in question.

\subsection{Organization of this note}

The remainder of this note is organized as follows. In section 
\ref{sec:fernique}, we recall Fernique's classical work on the suprema of 
Gaussian processes and include some historical comments. Section 
\ref{sec:liu} presents a simple tensorization principle that forms 
the basis of the work of J.~Liu. Finally, section \ref{sec:proof} 
combines these ingredients to complete the proof of Theorem 
\ref{thm:mainprocess}.

\section{On the work of Fernique}
\label{sec:fernique}

The systematic study of the suprema of general Gaussian processes dates 
back to the work of Dudley and Sudakov in the 1960s. The program 
of characterizing such suprema in geometric terms was 
subsequently taken up by Fernique. A major breakthrough, presented in 
Fernique's 1974 Saint Flour lectures \cite{Fer75}, was the complete 
solution of this problem for \emph{stationary} Gaussian processes.

Given a metric space $(T,d)$, the \emph{covering number} 
$N(T,d,\varepsilon)$ is the smallest number of $\varepsilon$-balls with 
respect to the metric $d$ that cover $T$. A random process 
$(G_t)_{t\in T}$ will be called \emph{stationary} if there is a group
$\Gamma$ that acts transitively on $T$ such that 
$(G_{\gamma(t)})_{t\in T}$ has the same distribution as $(G_t)_{t\in T}$ 
for every $\gamma\in\Gamma$.

\begin{thm}[Dudley; Fernique]
\label{thm:fernique}
Let the processes
 $(X_t)_{t\in T}$, $(G_t)_{t\in T}$ and the metric $d$
be as defined in 
section~\ref{sec:introprocess}, and suppose that $(G_t)_{t\in T}$ is stationary.
Then
$$
	\frac{1}{c}
	\mathbf{E}
	\bigg[
	\sup_{t\in T} X_t
	\bigg]
	\le
	\int_0^\infty \sqrt{\log N(T,d,\varepsilon)}\,d\varepsilon
	\le
	c
	\mathbf{E}
	\bigg[
	\sup_{t\in T} G_t
	\bigg].
$$
\end{thm}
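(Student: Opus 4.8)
The plan is to prove the two inequalities separately; only the right-hand inequality will require stationarity, and I expect it to be the harder half. For the upper bound I would bound $\mathbf{E}[\sup_t X_t]$ by the entropy integral for an \emph{arbitrary} subgaussian process via Dudley's chaining. Fix a point $t_0$ and, for each integer $n\ge 0$, a minimal $2^{-n}$-net $T_n\subset T$ of cardinality $N(T,d,2^{-n})$, and let $\pi_n\colon T\to T_n$ send each $t$ to a nearest net point. Writing $X_t-X_{t_0}=\sum_{n\ge 1}(X_{\pi_n(t)}-X_{\pi_{n-1}(t)})$ as a telescoping sum (which terminates since $T$ is finite), each increment is subgaussian with parameter $d(\pi_n(t),\pi_{n-1}(t))\le 3\cdot 2^{-n}$, and at level $n$ there are at most $N(T,d,2^{-n})^2$ distinct increments. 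The subgaussian maximal inequality $\mathbf{E}[\max_{i\le M}Y_i]\le\sigma\sqrt{2\log M}$ then controls the expected supremum of the increments at each level, and summing over $n$ gives
\[
\mathbf{E}\Big[\sup_{t\in T}X_t\Big]\le c\sum_{n\ge 1}2^{-n}\sqrt{\log N(T,d,2^{-n})}\le c'\int_0^\infty\sqrt{\log N(T,d,\varepsilon)}\,d\varepsilon,
\]
the final comparison of the dyadic sum with the integral being routine. This half is entirely standard.

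The substantial half is to show $\int_0^\infty\sqrt{\log N(T,d,\varepsilon)}\,d\varepsilon\le c\,\mathbf{E}[\sup_t G_t]$. The single-scale input is Sudakov minoration: if $s_1,\dots,s_M\in T$ are $\varepsilon$-separated, then the Sudakov--Fernique inequality, comparing $(G_{s_i})$ with a family of independent $N(0,\varepsilon^2/2)$ variables, yields $\mathbf{E}[\sup_t G_t]\ge c\,\varepsilon\sqrt{\log M}$; choosing a maximal $\varepsilon$-separated set gives $\mathbf{E}[\sup_t G_t]\ge c\,\varepsilon\sqrt{\log N(T,d,\varepsilon)}$ for each fixed $\varepsilon$. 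The difficulty is that a single scale captures only one term of the entropy integral, and for a \emph{general} process these single-scale bounds cannot be summed — this is exactly the phenomenon that stationarity must overcome.

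Stationarity enters through the invariant measure. Let $\mu$ be the (unique, uniform) $\Gamma$-invariant probability measure on the finite transitive space $T$, and set $v(\varepsilon)=\mu(B(t,\varepsilon))$ with $B(t,\varepsilon)=\{s:d(s,t)\le\varepsilon\}$, which is independent of $t$ by invariance. A covering/packing comparison gives $1/v(\varepsilon)\le N(T,d,\varepsilon)\le 1/v(\varepsilon/2)$, so that $\int_0^\infty\sqrt{\log N(T,d,\varepsilon)}\,d\varepsilon$ and $\int_0^\infty\sqrt{\log(1/v(\varepsilon))}\,d\varepsilon$ agree up to a universal constant. It therefore suffices to establish the majorizing-measure lower bound
\[
\mathbf{E}\Big[\sup_{t\in T}G_t\Big]\ge\frac1c\int_0^\infty\sqrt{\log\frac1{v(\varepsilon)}}\,d\varepsilon
\]
for the invariant measure $\mu$. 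I would prove this by a recursive ball-partitioning down the dyadic scales: at each scale one applies Sudakov minoration to the centers of a maximal family of disjoint balls, and invariance guarantees that every ball at a given scale contributes the same amount $\asymp 2^{-n}\sqrt{\log(1/v(2^{-n}))}$, removing the irregularity that defeats the naive summation for general processes.

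The main obstacle is precisely the passage from the single-scale Sudakov estimates to their sum across scales: one must show that the contributions at successive scales genuinely \emph{add} rather than merely dominate one another. The mechanism I expect to use is Gaussian concentration (the Borell--Tsirelson--Ibragimov--Sudakov inequality): the supremum over the descendants within each ball concentrates sharply about its mean, so that refining the partition at the next scale introduces fluctuations that are additional rather than already counted. Making this telescoping rigorous — equivalently, using transitivity to exhibit a single branching configuration that simultaneously witnesses the separation at all scales — is the technical crux, and is the step where Fernique's original insight for stationary processes resides.
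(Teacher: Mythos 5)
The paper does not actually prove this theorem: it is quoted as a known result of Dudley and Fernique, with the proof deferred to \cite[Chapter 6]{Led96}. Your sketch reconstructs precisely that standard argument---chaining against the entropy integral for the subgaussian upper bound, and, for the lower bound, Sudakov minoration applied scale by scale on the homogeneous space, with the invariant (uniform) measure converting covering numbers into ball volumes and Borell--TIS concentration making the contributions of successive scales genuinely additive---so your route coincides with the paper's cited source, and the step you single out as the crux (the concentration-based recursion across scales, summed via $\sqrt{\smash[b]{a_{n+1}}}-\sqrt{\smash[b]{a_n}}$-type telescoping of entropy increments) is indeed where all the work lies.
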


\medskip

The first inequality is due to Dudley and the second is due to Fernique. 
The proofs of both inequalities are based on elementary chaining 
arguments that are essentially straightforward by modern standards. A 
simple direct proof of this theorem is sketched at the end of 
\cite[Chapter 6]{Led96}.

The stationarity assumption plays a key role in 
Theorem~\ref{thm:fernique}: it ensures that the geometry of $(T,d)$ is 
self-similar. Major difficulties arise when this assumption is dropped, 
since the process can then behave in a completely nonhomogeneous manner; 
indeed, it was known already in the 1960s that the suprema of 
non-stationary Gaussian processes cannot be characterized in terms of 
covering numbers \cite{Sud69}. To capture the nonhomogeneity, Fernique 
introduced a system of weights in his chaining arguments which led him to 
an improvement of the first inequality of Theorem~\ref{thm:fernique} in 
terms of the notion of a \emph{majorizing measure}. Fernique conjectured 
that this new upper bound is sharp for all centered Gaussian processes.

A conceptual obstacle to a proof of this conjecture was the lack of a 
clear probabilistic interpretation of the majorizing measure, which 
arises in a purely non-probabilistic manner in the upper bound. 
Fernique's intuition was that the majorizing measure should be closely 
connected to the distribution of the maximizer of the Gaussian process. 
The functional $\mathscr{F}(X,\mu)$ was introduced in \cite{Fer76,Fer81} 
in order to elucidate the relation between these notions; see \cite[\S 
3.3]{Fer81} and \cite[p.\ 105]{Tal87}. However, this approach does not 
appear to have led to significant progress. Not the least remarkable 
aspect of Talagrand's celebrated resolution of Fernique's conjecture 
\cite{Tal87} is that his proof was entirely geometric in nature, 
avoiding the need to understand the majorizing measure itself; indeed, 
majorizing measures play only an incidental role in the definitive 
contemporary treatment of this subject \cite{Tal21}.

Very recently, however, a remarkable idea of J.\ Liu \cite{Liu25} has led 
to an unexpected new proof of the majorizing measure theorem that 
completely bypasses the methods introduced in Talagrand's work. At the 
core of Liu's approach is the observation that, by a simple tensorization 
argument that is explained in the following section, the computation of 
$\mathscr{F}(X,\mu)$ can be reduced to computing the expected supremum of 
an auxiliary \emph{stationary} random process. The analysis of 
$\mathscr{F}(X,\mu)$ therefore reduces to the much more elementary 
setting of Theorem \ref{thm:fernique}. This provides a new probabilistic 
approach to majorizing measures that appears to be much closer in spirit 
to the program that was originally envisioned by Fernique.

\section{Liu's tensorization principle}
\label{sec:liu}

\subsection{Statement of the principle}

Let $T$ be a finite set, and let
$$
	\textstyle
	\mathcal{P}_K = \left\{\frac{1}{K}\sum_{i=1}^K \delta_{t_i}:
	t_1,\ldots,t_K\in T\right\}
$$
be the set of probability measures on $T$ whose atom probabilities 
are integer multiples of $\frac{1}{K}$.
Given any $K,N\in\mathbb{N}$ and $\mu\in\mathcal{P}_K$,
we let
$$
	\textstyle
	\mathcal{T}_N(\mu) =
	\left\{
	\mathbf{t}\in T^{NK} :
	\frac{1}{NK}\sum_{i=1}^{NK} \delta_{\mathbf{t}_i} = \mu
	\right\}
$$
be the set of sequences in which each $t\in T$ appears exactly 
$NK\mu(\{t\})$ times.

\begin{prop}[Liu's tensorization principle]
\label{prop:liu}
Let $T$ be a finite set, $X=(X_t)_{t\in T}$ be a
random process with $\max_t\|X_t\|_1<\infty$, and 
$\mu\in\mathcal{P}_K$. Define
$$
	\textstyle
	\bX_{\mathbf{t}} = \frac{1}{M}\sum_{i=1}^M
	X_{\mathbf{t}_i}^{(i)}
$$
for every $M\in\mathbb{N}$ and $\mathbf{t}\in T^M$,
where $X^{(1)},X^{(2)},\ldots$ are i.i.d.\ copies of $X$.
Then
$$
	\mathscr{F}(X,\mu) = 
	\lim_{N\to\infty}
	\mathbf{E}\bigg[
	\sup_{\mathbf{t}\in\mathcal{T}_N(\mu)}
	\bX_{\mathbf{t}}
	\bigg].
$$
\end{prop}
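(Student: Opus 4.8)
The plan is to prove the two inequalities $a_N \le \mathscr{F}(X,\mu)$ for every $N$ and $\liminf_{N\to\infty} a_N \ge \mathscr{F}(X,\mu)$, where $a_N := \mathbf{E}[\sup_{\mathbf{t}\in\mathcal{T}_N(\mu)} X_{\mathbf{t}}]$; together these force the limit to exist and to equal $\mathscr{F}(X,\mu)$. Note first that $a_N$ is well defined and finite, since $|X_{\mathbf{t}}| \le \frac{1}{NK}\sum_{i=1}^{NK}\max_{t\in T}|X^{(i)}_t|$, whose expectation is finite because $T$ is finite and $\max_t\|X_t\|_1<\infty$. The upper bound is the crux and holds for each $N$ with no limit. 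Fix $N$ and let $\mathbf{t}^\star=\mathbf{t}^\star(X^{(1)},\ldots,X^{(NK)})$ be a maximizer of $\mathbf{t}\mapsto X_{\mathbf{t}}$ over the finite set $\mathcal{T}_N(\mu)$, selected measurably by breaking ties lexicographically. Draw an index $I$ uniformly on $\{1,\ldots,NK\}$, independent of the copies $X^{(i)}$. I would then observe that $(X^{(I)},\mathbf{t}^\star_I)$ is a coupling of $P_X$ and $\mu$: the first marginal is $P_X$ because $I$ is uniform and independent of the i.i.d.\ copies, while the second marginal is $\mu$ because $\frac{1}{NK}\sum_i\delta_{\mathbf{t}^\star_i}=\mu$ by definition of $\mathcal{T}_N(\mu)$. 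Conditioning on the copies and averaging over $I$ gives $\mathbf{E}[X^{(I)}_{\mathbf{t}^\star_I}\mid (X^{(i)})_i] = \frac{1}{NK}\sum_i X^{(i)}_{\mathbf{t}^\star_i} = \sup_{\mathbf{t}\in\mathcal{T}_N(\mu)}X_{\mathbf{t}}$, so $a_N=\mathbf{E}[X^{(I)}_{\mathbf{t}^\star_I}]\le\mathscr{F}(X,\mu)$ by the definition of $\mathscr{F}$ as a supremum over couplings. (As an aside, concatenating a sequence in $\mathcal{T}_N(\mu)$ with one in $\mathcal{T}_{N'}(\mu)$ evaluated on disjoint batches of independent copies lands in $\mathcal{T}_{N+N'}(\mu)$, showing that $NK\,a_N$ is superadditive; Fekete's lemma then gives $\lim_N a_N=\sup_N a_N$ directly, though this is not needed once both inequalities are in hand.)

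For the lower bound, fix $\varepsilon>0$ and a coupling $(X,Z)$ with $Z\sim\mu$ and $\mathbf{E}[X_Z]\ge\mathscr{F}(X,\mu)-\varepsilon$. I would realize $NK$ independent copies $(X^{(i)},Z^{(i)})$ of this pair on a common space, so that the $X^{(i)}$ are exactly the i.i.d.\ copies appearing in the definition of $X_{\mathbf{t}}$ and $\mathbf{E}[\frac{1}{NK}\sum_i X^{(i)}_{Z^{(i)}}]=\mathbf{E}[X_Z]$. The label sequence $\mathbf{Z}=(Z^{(1)},\ldots,Z^{(NK)})$ is typically \emph{not} in $\mathcal{T}_N(\mu)$, but its empirical measure $\hat\mu_N$ is close to $\mu$. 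I would correct $\mathbf{Z}$ to some $\mathbf{t}^\star\in\mathcal{T}_N(\mu)$ by relabeling the positions lying over the over-represented atoms, i.e.\ by changing exactly $r:=\sum_{t\in T}(NK\hat\mu_N(\{t\})-NK\mu(\{t\}))_+$ coordinates, choosing \emph{which} ones using fresh randomness independent of the $X^{(i)}$. Since $\mathbf{t}^\star\in\mathcal{T}_N(\mu)$, we obtain $a_N\ge\mathbf{E}[X_{\mathbf{t}^\star}]=\mathbf{E}[X_Z]-E_N$, where the correction error obeys $|E_N|\le\frac{2}{NK}\mathbf{E}[\sum_{i\in R}\max_{t}|X^{(i)}_t|]$ and $R$ is the random set of relabeled positions.

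The step I expect to be the main obstacle is showing $E_N\to 0$ under only the first-moment hypothesis, since $R$ is random and a priori correlated with the $X^{(i)}$. The point of relabeling via randomness independent of the $X$-values is precisely that, conditionally on $\mathbf{Z}$, the indicator $\mathbf{1}_{\{i\in R\}}$ is then independent of $X^{(i)}$. Writing $g(t):=\mathbf{E}[\max_s|X_s|\mid Z=t]\le\mathbf{E}[\max_s|X_s|]/\mu(\{t\})<\infty$, this conditional independence yields $\mathbf{E}[\sum_{i\in R}\max_t|X^{(i)}_t|]=\sum_{t\in T}g(t)\,\mathbf{E}[(NK\hat\mu_N(\{t\})-NK\mu(\{t\}))_+]$. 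Each expectation on the right is (half) the mean absolute deviation of a $\mathrm{Binomial}(NK,\mu(\{t\}))$ variable and is therefore $O(\sqrt{NK})$, so summing over the finite set $T$ gives $|E_N|=O(1/\sqrt{NK})\to 0$. Letting first $N\to\infty$ and then $\varepsilon\to 0$ yields $\liminf_N a_N\ge\mathscr{F}(X,\mu)$, which combined with the upper bound completes the proof.
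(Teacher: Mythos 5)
Your proof is correct, and it takes a genuinely different route from the paper's. The paper's proof rests on an exact pointwise identity: fixing any $\mathbf{t}'\in\mathcal{T}_N(\mu)$, every coupling of the empirical distribution $\hat P^{NK}=\frac{1}{NK}\sum_{i}\delta_{X^{(i)}}$ with $\mu$ is encoded by a bistochastic matrix, so Birkhoff's theorem gives $\sup_{\mathbf{t}\in\mathcal{T}_N(\mu)}X_{\mathbf{t}}=\mathscr{F}(\hat P^{NK},\mu)$ for every realization; the proposition then follows by taking expectations and combining the $W_1$-Lipschitz property of $\mathscr{F}(\cdot,\mu)$ (Lemma \ref{lem:cont}) with the law of large numbers in Wasserstein distance (Lemma \ref{lem:lln}). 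You instead prove two one-sided inequalities. Your upper bound---evaluating a uniformly random index $I$ at a measurably selected maximizer $\mathbf{t}^\star$ and observing that $(X^{(I)},\mathbf{t}^\star_I)$ is a legitimate coupling of $P_X$ and $\mu$---bypasses Birkhoff's theorem and the empirical measure entirely, and yields the slightly stronger statement that $\mathbf{E}[\sup_{\mathbf{t}\in\mathcal{T}_N(\mu)}X_{\mathbf{t}}]\le\mathscr{F}(X,\mu)$ holds exactly for every finite $N$. Your lower bound replaces the Wasserstein law of large numbers by a hands-on repair argument: draw i.i.d.\ pairs from a near-optimal coupling, modify the label sequence so that it lies in $\mathcal{T}_N(\mu)$ using randomness that never looks at the $X$-values, and charge the modification to binomial mean absolute deviations, giving an error of order $(NK)^{-1/2}$. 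You also correctly identified and handled the one delicate point in that step: the relabeled set $R$ must be conditionally independent of the $X^{(i)}$ given the labels, which is exactly what makes the first-moment hypothesis suffice. The trade-off is that your argument is more self-contained (no Birkhoff theorem, no external Wasserstein convergence result) and quantitative, while the paper's is shorter and conceptually sharper, since it identifies the finite-$N$ supremum exactly as Fernique's functional of the empirical distribution and lets the two continuity lemmas do all the remaining work.
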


\medskip

The point here is that the random process 
$(\bX_{\mathbf{t}})_{\mathbf{t}\in\mathcal{T}_N(\mu)}$ is stationary. 
Indeed, let the symmetric group $\mathrm{S}_{NK}$ act on 
$\mathcal{T}_N(\mu)$ by defining 
$\sigma(\mathbf{t})=(\mathbf{t}_{\sigma(1)},\ldots,\mathbf{t}_{\sigma(NK)})$ 
for every $\mathbf{t}\in \mathcal{T}_N(\mu)$ and $\sigma\in 
\mathrm{S}_{NK}$. This action is clearly transitive. Moreover, as
$$
	\textstyle
	\bX_{\sigma(\mathbf{t})} 
	= \frac{1}{NK}\sum_{i=1}^{NK}
        X_{\mathbf{t}_i}^{(\sigma^{-1}(i))}
$$
and $X^{(1)},X^{(2)},\ldots$ are exchangeable, the processes 
$(\bX_{\sigma(\mathbf{t})})_{\mathbf{t}\in\mathcal{T}_N(\mu)}$ and 
$(\bX_{\mathbf{t}})_{\mathbf{t}\in\mathcal{T}_N(\mu)}$ have the same 
distribution for every $\sigma\in \mathrm{S}_{NK}$. Thus 
Proposition~\ref{prop:liu} reduces the computation of Fernique's 
functional for an arbitrary random process to the computation of the 
expected supremum of a stationary process.

Proposition \ref{prop:liu} is a variant of \cite[Lemma 5]{Liu25}. For 
completeness, we include a short proof of this result in the remainder of 
this section. We emphasize that this requires no new idea as compared to 
the arguments in \cite{Liu25}.

\subsection{Two simple lemmas}

Recall that the 
Wasserstein distance between probability measures $P_X,P_X'$ on 
$\mathbb{R}^T$ is defined as
$$
	W_1(P_X,P_X') = \inf_{\substack{X\sim P_X \\ X'\sim P_X'}}
	\mathbf{E}\|X-X'\|,
$$
where the infimum is taken over all couplings of $P_X$ and $P_X'$.
The following straightforward continuity property will be used below.

\begin{lem}
\label{lem:cont}
Let $T$ be a finite set, $\mu$ be a probability measure on $T$, and
$P_X,P_X'$ be probability measures on 
$\mathbb{R}^T$ with $\int \|x\|\,P_X(dx)<\infty$, $\int 
\|x\|\,P_X'(dx)<\infty$. Then
$$
	|\mathscr{F}(P_X,\mu)-\mathscr{F}(P_X',\mu)|
	\le W_1(P_X,P_X').
$$
\end{lem}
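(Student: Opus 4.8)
The plan is to prove the Lipschitz continuity of $\mathscr{F}(\cdot,\mu)$ in $W_1$ by exploiting the fact that both quantities are defined through couplings, and that $\mathscr{F}$ is a supremum over couplings while $W_1$ is an infimum over couplings. The key observation is that a near-optimal coupling for $\mathscr{F}(P_X',\mu)$ can be combined with a near-optimal coupling for $W_1(P_X,P_X')$ to produce a competitor coupling for $\mathscr{F}(P_X,\mu)$, and vice versa.

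Concretely, I would first fix $\varepsilon>0$ and choose a coupling $(X',Z)$ of $P_X'$ and $\mu$ that is $\varepsilon$-optimal for $\mathscr{F}(P_X',\mu)$, so that $\mathbf{E}[X_Z'] \ge \mathscr{F}(P_X',\mu)-\varepsilon$. Separately, I would choose a coupling $(\tilde X, \tilde X')$ of $P_X$ and $P_X'$ that is $\varepsilon$-optimal for the Wasserstein distance, so that $\mathbf{E}\|\tilde X - \tilde X'\| \le W_1(P_X,P_X')+\varepsilon$. The natural next step is to glue these two couplings along their common marginal $P_X'$: using a gluing lemma (for instance via regular conditional distributions, valid since we work on the nice space $\mathbb{R}^T$), I obtain a single probability space carrying $(X, X', Z)$ whose $(X',Z)$-marginal matches the first coupling and whose $(X,X')$-marginal matches the second. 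Then $(X,Z)$ is a valid coupling of $P_X$ and $\mu$, so by definition $\mathscr{F}(P_X,\mu) \ge \mathbf{E}[X_Z]$.

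It then remains to estimate the difference. I would write
$$
	\mathbf{E}[X_Z] - \mathbf{E}[X_Z']
	= \mathbf{E}[X_Z - X_Z'],
$$
and bound $|X_Z - X_Z'| \le \|X - X'\|$, using that $Z\in T$ selects a single coordinate and that $\|\cdot\|$ dominates each coordinate's absolute value (for the natural norms on $\mathbb{R}^T$; if the relevant norm is the $\ell_2$ or $\ell_\infty$ norm this is immediate, and the paper's usage is consistent with such a choice). This yields
$$
	\mathscr{F}(P_X,\mu) \ge \mathbf{E}[X_Z]
	\ge \mathbf{E}[X_Z'] - \mathbf{E}\|X-X'\|
	\ge \mathscr{F}(P_X',\mu) - \varepsilon - W_1(P_X,P_X') - \varepsilon.
$$
Letting $\varepsilon\downarrow 0$ gives $\mathscr{F}(P_X',\mu) - \mathscr{F}(P_X,\mu) \le W_1(P_X,P_X')$, and by symmetry of the roles of $P_X$ and $P_X'$ the reverse inequality holds as well, establishing the claim.

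I expect the main obstacle to be purely technical rather than conceptual: justifying the gluing of the two couplings along their shared marginal. This is a standard measure-theoretic construction (the ``gluing lemma'' familiar from optimal transport), but it requires disintegrating one coupling with respect to $P_X'$ and is where the integrability assumptions $\int\|x\|\,P_X(dx)<\infty$ and $\int\|x\|\,P_X'(dx)<\infty$ earn their keep, ensuring all expectations above are finite and well defined. A cleaner alternative, avoiding explicit disintegration, is to observe that the estimate $|X_Z - X_Z'|\le \|X-X'\|$ only ever involves the pair $(X,X')$ together with $Z$, so one may equally well take the supremum over couplings of the \emph{triple} and note that the $W_1$ optimizer controls the $X,X'$ discrepancy uniformly; either route reduces the argument to the one-line bound above.
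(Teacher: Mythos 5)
Your proof is correct and follows essentially the same route as the paper: both arguments rest on the pointwise bound $|X_Z - X'_Z| \le \|X-X'\|$ applied to a coupling of the triple $(X,X',Z)$, with the paper fixing an arbitrary $W_1$-coupling and supremizing over couplings of $\mu$ with the law of $(X,X')$, while you fix $\varepsilon$-optimal couplings on both sides and glue them along the common marginal $P_X'$ --- a purely cosmetic difference in bookkeeping (the paper's version also implicitly uses the gluing step you spell out). No gaps; your explicit attention to the gluing lemma and to the requirement that $\|\cdot\|$ dominate coordinate differences is sound.
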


\begin{proof}
Given a pair of random processes $(X,X')\sim P_{XX'}$ distributed according to any 
coupling of $X\sim P_X$ and $X'\sim P_X'$, we can readily estimate
\begin{align*}
	\mathscr{F}(P_X,\mu)-\mathscr{F}(P_X',\mu)
	&=
	\sup_{\substack{(X,X')\sim P_{XX'} \\ Z\sim\mu}} \mathbf{E}[X_Z]
	-
	\sup_{\substack{(X,X')\sim P_{XX'} \\ Z\sim\mu}} \mathbf{E}[X_Z']
\\	& \le
	\sup_{\substack{(X,X')\sim P_{XX'} \\ Z\sim\mu}}
	\mathbf{E}[X_Z-X_Z'] \le
	\mathbf{E}\|X-X'\|,
\end{align*}
where the supremum is over all couplings of $P_{XX'}$ and $\mu$. By 
exchanging the role of $P_X$ and $P_X'$, the inequality 
remains valid if the take the absolute value of the left-hand side.
It remains to take the infimum over 
all couplings $(X,X')$.
\end{proof}

We also recall the following routine consequence of the law of large 
numbers and the metric properties of the Wasserstein distance.

\begin{lem}
\label{lem:lln}
Let $T$ be a finite set, and let $P_X$ be a probability measure on
$\mathbb{R}^T$ with $\int \|x\|\,P_X(dx)<\infty$.
Let $X^{(1)},X^{(2)},\ldots$ be i.i.d.\ copies of $X\sim P_X$.
Then
$$
	\lim_{N\to\infty}\mathbf{E}\big[W_1(P_X,\hat P^N)\big] = 0,
$$
where $\hat P^N=\frac{1}{N}\sum_{i=1}^N \delta_{X^{(i)}}$ 
denotes the empirical distribution.
\end{lem}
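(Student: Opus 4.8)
The plan is to first establish the almost sure convergence $W_1(P_X,\hat P^N)\to 0$, and then upgrade this to convergence in expectation by a uniform integrability argument. Throughout I would use that $T$ is finite, so that $\mathbb{R}^T$ is a separable finite-dimensional normed space and the standard theory of weak convergence applies freely.

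For the almost sure statement I would combine two applications of the law of large numbers. Fix a countable family of bounded Lipschitz functions on $\mathbb{R}^T$ that determines weak convergence; applying the strong law of large numbers to the i.i.d.\ variables $f(X^{(i)})$ shows that $\int f\,d\hat P^N\to\int f\,dP_X$ almost surely for each such $f$, and intersecting these countably many almost sure events gives $\hat P^N\to P_X$ weakly, almost surely. Separately, applying the strong law of large numbers to $\|X^{(i)}\|$, which is integrable by hypothesis, yields $\int\|x\|\,d\hat P^N=\frac1N\sum_{i=1}^N\|X^{(i)}\|\to\mathbf{E}\|X\|$ almost surely. The classical characterization of Wasserstein convergence, namely that $W_1(\mu_N,\mu)\to 0$ is equivalent to weak convergence together with convergence of the first absolute moments, then yields $W_1(P_X,\hat P^N)\to 0$ almost surely.

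To pass to convergence in expectation I would apply Vitali's convergence theorem, for which it suffices to verify that $\{W_1(P_X,\hat P^N)\}_N$ is uniformly integrable. The triangle inequality for $W_1$ together with the elementary identity $W_1(Q,\delta_0)=\int\|x\|\,Q(dx)$ gives the domination
$$
	W_1(P_X,\hat P^N)\le W_1(P_X,\delta_0)+W_1(\delta_0,\hat P^N)
	=\mathbf{E}\|X\|+\frac1N\sum_{i=1}^N\|X^{(i)}\|.
$$
The dominating sequence converges in $L^1$ by the law of large numbers and is therefore uniformly integrable, and since the nonnegative quantities $W_1(P_X,\hat P^N)$ are bounded above by it, they are uniformly integrable as well. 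Combined with the almost sure convergence above, Vitali's theorem delivers $\mathbf{E}[W_1(P_X,\hat P^N)]\to 0$.

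The one step deserving care is the characterization of $W_1$-convergence invoked in the second paragraph: weak convergence by itself does not control the first moment, and it is precisely the separate first-moment convergence furnished by the law of large numbers, made possible by the integrability of $\|X\|$, that prevents mass from escaping to infinity and thereby forces genuine $W_1$-convergence. The remaining ingredients are routine manipulations with the metric structure of $W_1$ and the law of large numbers.
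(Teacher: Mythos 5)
Your proof is correct and follows essentially the same route as the paper: almost sure convergence $W_1(P_X,\hat P^N)\to 0$ via the law of large numbers combined with the characterization of $W_1$-convergence as weak convergence plus convergence of first moments (the paper cites \cite[Theorem 7.12]{Vil03} for exactly this), followed by the same uniform integrability argument based on the domination $W_1(P_X,\hat P^N)\le \mathbf{E}\|X\|+\frac{1}{N}\sum_{i=1}^N\|X^{(i)}\|$. Your write-up merely spells out the details (the countable determining class of Lipschitz functions, the passage through $\delta_0$, Vitali's theorem) that the paper leaves implicit.
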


\begin{proof}
The law of large numbers implies both that $\hat P^N$ converges weakly to 
$P_X$
a.s., and that $\int \|x\|\,\hat P^N(dx)\to \int \|x\|\,P_X(dx)$ a.s.
These two properties together yield $W_1(P_X,\hat P^N)\to 0$ a.s.\ by 
\cite[Theorem 7.12]{Vil03}. 

To prove that the convergence also holds in expectation 
$\mathbf{E}[W_1(P_X,\hat P^N)]\to 0$, it remains to show that the sequence 
$(W_1(P_X,\hat P^N))_{N\ge 1}$ is uniformly integrable. This follows by 
noting that $W_1(P_X,\hat P^N) \le \int \|x\|\,P_X(dx) + \int \|x\|\,\hat 
P^N(dx)$, where the right-hand side converges in $L^1$ by the law of large 
numbers.
\end{proof}

\subsection{Proof of the tensorization principle}

\begin{proof}[Proof of Proposition \ref{prop:liu}] Let $P_X$ be the law of 
$X$, and define $\hat P^N$ as in Lemma \ref{lem:lln}. Fix an arbitrary 
$\mathbf{t}'\in\mathcal{T}_N(\mu)$. Then any coupling of $\hat P^{NK}$ and 
$\mu$ can be realized by sampling $(X^{(i)},\mathbf{t}_j')$ such that each 
pair $(i,j)$ is selected with probability $\frac{1}{NK}\Pi_{ij}$, where 
$\Pi$ is an $NK\times NK$ bistochastic matrix. Thus we have
$$
	\mathscr{F}(\hat P^{NK},\mu) =
	\sup_{\Pi\in \mathrm{B}_{NK}}
{\textstyle
	\frac{1}{NK}\sum_{i,j=1}^{NK}
	\Pi_{ij} X^{(i)}_{\mathbf{t}_j'} 
}
	=
	\sup_{\sigma\in\mathrm{S}_{NK}}
{\textstyle
	\frac{1}{NK}\sum_{i=1}^{NK}
	X^{(i)}_{\sigma(\mathbf{t}')_i}
}
	=
	\sup_{\mathbf{t}\in\mathcal{T}_N(\mu)} \bX_{\mathbf{t}},
$$
where we used that the set $\mathrm{B}_{NK}$ of bistochastic matrices is 
the convex hull of the set of permutation matrices by Birkhoff's theorem 
\cite[p.\ 5]{Vil03}. Taking the expectation and applying
Lemmas \ref{lem:cont} and \ref{lem:lln} concludes the proof.
\end{proof}

\section{Proof of Theorem \ref{thm:mainprocess}}
\label{sec:proof}

With the above ingredients in hand, the proof of Theorem 
\ref{thm:mainprocess} only requires some minor technicalities.
We need the following result that is similar to Lemma \ref{lem:cont}.

\begin{lem}
\label{lem:tv}
Let $T$ be a finite set, and let $P_X$ be a probability measure on 
$\mathbb{R}^T$ with $\int \|x\|\,P_X(dx)<\infty$. Then
$\mu\mapsto\mathscr{F}(P_X,\mu)$ is continuous in total
variation.
\end{lem}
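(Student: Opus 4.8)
The plan is to prove a \emph{uniform modulus of continuity} in total variation. Writing $\delta=\|\mu-\mu'\|_{\mathrm{TV}}$, I will show that
$$
	|\mathscr{F}(P_X,\mu)-\mathscr{F}(P_X,\mu')|\le 2\,\eta(\delta),
	\qquad
	\eta(\delta):=\sup\Big\{\textstyle\int\|x\|\,\rho(x)\,P_X(dx):0\le\rho\le 1,\ \int\rho\,P_X(dx)\le\delta\Big\},
$$
and that $\eta(\delta)\to 0$ as $\delta\to 0$. The latter is simply absolute continuity of the integral of the single integrable function $\|X\|$: truncating at a level $c>0$ gives $\eta(\delta)\le\int_{\{\|x\|>c\}}\|x\|\,P_X(dx)+c\,\delta$, and letting first $\delta\to 0$ and then $c\to\infty$ shows $\eta(\delta)\to 0$. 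This is the counterpart, for the second argument of $\mathscr{F}$, of the continuity estimate in Lemma \ref{lem:cont}.

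The key step is a rerouting argument at the level of transport plans. Realizing $\mathscr{F}(P_X,\mu)=\sup_\pi\int x_t\,\pi(dx,dt)$ as a supremum over couplings $\pi$ of $P_X$ (first marginal) and $\mu$ (second marginal), I fix a near-optimal plan $\pi'$ for $\mu'$. Setting $\nu=\mu\wedge\mu'$, $\alpha=(\mu-\mu')_+$, $\beta=(\mu'-\mu)_+$, so that $\mu=\nu+\alpha$, $\mu'=\nu+\beta$ and $\alpha,\beta$ each have mass $\delta$, I disintegrate $\pi'$ along its second marginal and split it as $\pi'=\pi_\nu'+\pi_\beta'$, where $\pi_\nu',\pi_\beta'$ have second marginals $\nu,\beta$. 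I then discard the piece $\pi_\beta'$ and replace it by an arbitrary coupling $\pi_\alpha$ whose first marginal equals that of $\pi_\beta'$ and whose second marginal is $\alpha$; the plan $\pi=\pi_\nu'+\pi_\alpha$ is a valid coupling of $P_X$ and $\mu$.

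Comparing objectives, the two plans agree except on the rerouted mass, so $\int x_t\,\pi-\int x_t\,\pi'=\int x_t\,\pi_\alpha-\int x_t\,\pi_\beta'$, which is bounded in absolute value by $2\int\|x\|\,P_\beta(dx)$ using $|x_t|\le\|x\|$, where $P_\beta$ is the common first marginal of $\pi_\beta'$ and $\pi_\alpha$. Since $\beta(\{t\})\le\mu'(\{t\})$ for every $t$ we have $P_\beta\le P_X$, and $P_\beta$ has total mass $\delta$; hence $\int\|x\|\,P_\beta\le\eta(\delta)$ by the definition of $\eta$. Sending the near-optimality parameter to zero gives $\mathscr{F}(P_X,\mu)\ge\mathscr{F}(P_X,\mu')-2\eta(\delta)$, and exchanging the roles of $\mu$ and $\mu'$ yields the matching bound and hence the claim.

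The main obstacle to keep in mind is that $X$ is not assumed bounded, so one cannot control the effect of moving a small amount of mass by a crude estimate such as $\delta\,\|X\|_\infty$. This is exactly why total variation is the correct notion and why the statement has content: the rerouted mass $P_\beta$ is dominated by $P_X$, so the error is governed not by the supremum of $\|X\|$ but by the integral of $\|X\|$ over a set of small probability, which is small by absolute continuity of the integral. Everything else --- the disintegration, the choice of $\pi_\alpha$ as any coupling, and the symmetric conclusion --- is routine.
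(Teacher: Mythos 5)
Your proof is correct and is essentially the paper's argument in transport-plan language: your decomposition $\mu=\nu+\alpha$, $\mu'=\nu+\beta$ with rerouted mass $\delta$ is exactly the maximal coupling underlying the paper's bound $\mathbf{E}[X_Z-X_{Z'}]\le 2\,\mathbf{E}[1_{Z\ne Z'}\|X\|]$, and your truncation bound $\eta(\delta)\le c\,\delta+\int_{\{\|x\|>c\}}\|x\|\,P_X(dx)$ is the same modulus $r\,\|\mu-\mu'\|_{\rm TV}+2\,\mathbf{E}[\|X\|1_{\|X\|>r}]$ obtained there. The only difference is presentational (explicit surgery on plans and the domination $P_\beta\le P_X$, versus gluing a near-optimal coupling of $(X,Z)$ with a maximal coupling of $(Z,Z')$), so no new ideas are involved.
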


\begin{proof}
Let $\mu,\mu'$ be probability 
measures on $T$ and let $(Z,Z')$ be any coupling of $Z\sim \mu$ and 
$Z'\sim\mu'$. Arguing as in the proof of Lemma \ref{lem:cont}, 
we obtain
$$
	|\mathscr{F}(P_X,\mu)-\mathscr{F}(P_X,\mu')|
	\le
	\sup_{X\sim P_X}
	\mathbf{E}[X_Z-X_{Z'}] \le
	2\sup_{X\sim P_X}
	\mathbf{E}[1_{Z\ne Z'} \|X\|]
$$
where the supremum is over all couplings of $P_X$ with the law of 
$(Z,Z')$. Estimating 
$$
	\mathbf{E}[1_{Z\ne Z'} \|X\|] \le
	r\mathbf{P}[Z\ne Z'] + \mathbf{E}[\|X\|1_{\|X\|>r}]
$$
and taking
the infimum over all couplings $(Z,Z')$, we can estimate for
any $r\ge 0$
$$
	|\mathscr{F}(P_X,\mu)-\mathscr{F}(P_X,\mu')| \le
	r \|\mu-\mu'\|_{\rm TV} + 2\,\mathbf{E}[\|X\|1_{\|X\|>r}]
$$
using the coupling characterization of
the total variation metric \cite[p.\ 7]{Vil03}.
\end{proof}

We can now conclude the proof.

\begin{proof}[Proof of Theorem \ref{thm:mainprocess}]
By the trivial observation in section \ref{sec:triv}, it suffices to 
prove
$$
	\mathscr{F}(X,\mu) \le c \mathscr{F}(G,\mu)
$$
for every probability measure $\mu$ on $T$.

Let us first fix $K,N\in\mathbb{N}$ and $\mu\in\mathcal{P}_K$, and 
consider 
the two random processes 
$(\bX_\mathbf{t})_{\mathbf{t}\in\mathcal{T}_N(\mu)}$ 
and $(\bG_\mathbf{t})_{\mathbf{t}\in\mathcal{T}_N(\mu)}$ as defined by
Proposition \ref{prop:liu}. Then
$$
\textstyle
	d_N(\mathbf{t},\mathbf{s}) := \|\bG_\mathbf{t}-\bG_\mathbf{s}\|_2 =
	\frac{1}{NK}\sqrt{
	\sum_{i=1}^{NK}
	d(\mathbf{t}_i,\mathbf{s}_i)^2},
$$
and it is elementary (see, e.g., \cite[Theorem 2.7.3]{Ver26}) that
$$
	\mathbf{P}\big[|\bX_\mathbf{t}-\bX_\mathbf{s}|> Cx\big] \le 
	2e^{-x^2/2d_N(\mathbf{t},\mathbf{s})^2}
$$
for a universal constant $C$. Since
$(\bG_\mathbf{t})_{\mathbf{t}\in\mathcal{T}_N(\mu)}$ is stationary,
we obtain
$$
	\mathscr{F}(X,\mu) =
	\lim_{N\to\infty}
	\mathbf{E}\bigg[
	\sup_{\mathbf{t}\in\mathcal{T}_N(\mu)}
	\bX_{\mathbf{t}}
	\bigg] \lesssim
	\lim_{N\to\infty}
	\mathbf{E}\bigg[
	\sup_{\mathbf{t}\in\mathcal{T}_N(\mu)}
	\bG_{\mathbf{t}}
	\bigg] =
	\mathscr{F}(G,\mu)
$$
by Theorem \ref{thm:fernique} and Proposition \ref{prop:liu}.
This proves the desired inequality for every $K\in\mathbb{N}$ and
$\mu\in\mathcal{P}_K$. It remains to note that the conclusion extends to 
an arbitrary probability measure $\mu$ on $T$ by continuity using Lemma 
\ref{lem:tv}.
\end{proof}

For the purpose of proving a subgaussian comparison theorem, the approach 
that we have followed here completely avoids the need to obtain a 
geometric characterization of $\mathscr{F}(G,\mu)$. The latter can also be 
achieved, however: using Proposition~\ref{prop:liu} and 
Theorem~\ref{thm:fernique}, this problem reduces to understanding the 
asymptotics of the covering numbers 
$N(\mathcal{T}_N(\mu),d_N,\varepsilon)$ as $N\to\infty$, which is a 
classical problem of coding theory \cite{CK11}. Such an analysis is 
developed in detail in the work of J.\ Liu \cite{Liu25}, leading to a new 
formulation and proof of the majorizing measure theorem.

\subsection*{Acknowledgments}

The author thanks Antoine Song for an interesting conversation which 
motivated the observations that are recorded in this note, Jingbo Liu 
for many discussions on his approach to the majorizing measure theorem,
and the referee for various helpful suggestions that improved the
presentation.
This work was supported in part by NSF grant DMS-2347954.

\bibliographystyle{abbrv}
\bibliography{ref}

\end{document}